\newcommand*\circled[1]{\tikz[baseline=(char.base)]{
            \node[shape=circle,draw,inner sep=1pt,thick] (char) {#1};}}
\numberwithin{equation}{section}
\theoremstyle{plain}
\newtheorem{theorem}{Theorem}[section]
\newtheorem*{theorem*}{Theorem}
\newtheorem{proposition}[theorem]{Proposition}
\newtheorem{claim}[theorem]{Claim}
\newtheorem{example}[theorem]{Example}
\newtheorem{definition}[theorem]{Definition}
\numberwithin{theorem}{section}
\DeclareMathOperator{\PFZ}{PFZ}
\DeclareMathOperator{\PF}{PF}
\newcommand{\pfz}[1]{\psi(#1)}
\def\decomp{\mathcal P}
\def\Z{\mathbb Z}
\DeclareMathOperator{\diag}{diag}
\DeclareMathOperator{\secdinv}{\dinv_2}
\DeclareMathOperator{\pridinv}{\dinv_1}
\DeclareMathOperator{\dinv}{dinv}
\newcommand{\mydef}[1]{{\bf #1}}
\DeclareMathOperator{\row}{row}
\DeclareMathOperator{\dom}{dom}
\DeclareMathOperator{\col}{col}
\newcommand{\rowa}[2]{\row_{#1}(#2)}
\newcommand{\cola}[2]{\col_{#1}(#2)}
\DeclareMathOperator{\HT}{ht}
\DeclareMathOperator{\WD}{wd}
\newcommand{\hta}[2]{\HT(#1_{#2})}
\newcommand{\diaga}[2]{\diag_{#1}(#2)}
\def\mainmap{\Psi}
\def\numocd{\phi}
\newcommand{\maxdiag}[1]{\Delta_{#1}}
\def\mcc{\mathfrak c}
\def\mcd{\mathfrak d}
\newcommand{\omitt}[1]{}
\author{Susanna Fishel}
\address{School of Mathematical and Statistical Sciences, Arizona State University, P.O. Box 871804, Tempe, AZ 85287-1804, USA} \email{sfishel1@asu.edu}
\author{Luis Pena}
\address{Instituto de Matem\'atica y F\'{\i}sica, Universidad de Talca, Casilla 747, Talca,
Chile} \email{luis.cardenas@utalca.cl}
\title{Parking functions with zero secondary dinv}
\date{\today}
\begin{document}

\thanks{Funding: The first author was partially supported by
 Simons Collaboration Grants for
Mathematicians \#359602 and \#709671. Work began when the first author was at University of Talca on a Fulbright Scholar Grant. The second author was partially supported by a Beca de Doctorado Nacional ANID, folio 21211108.}

\dedicatory{Dedicated to Mourad Ismail.}
\begin{abstract}
In this note, we show that the number of parking functions of length $n$ with zero secondary dinv is equal to the number of ordered cycle decompositions of permutations of $[n]$.
  \end{abstract}
\maketitle
\section{Introduction}

Carlsson and Mellit proved the renowned shuffle conjecture in \cite{CM}; that is they proved that the conjectured combinatorial formula for the Frobenius character of the diagonal coinvariant algebra is correct. The combinatorial expression is in terms of parking functions, using the statistics area and dinv.

The number of diagonal inversions or dinv is a well-studied statistic, first defined for Dyck paths, then expanded to all parking functions. Please see the standard reference \cite{haglund2008}. In \cite{gxz2014} Garsia, Xin, and Zabrocki defined  primary and secondary dinv, which refine dinv. In their proof of the three shuffle case of a refinement of the shuffle conjecture, they found and used a bijection and noticed that their bijection swapped primary and secondary dinv. We enumerate the parking functions with zero secondary dinv.

 Let $\PFZ(n)$ denote the set of parking functions defined on $[n]$ with zero secondary dinv and $\pfz n := | \PFZ (n) |$, where we set $\pfz 0=1$.  We will show that for $n \geq 1$, $\pfz n$ satisfies the following recursion
\begin{equation}\label{Recursion}
\pfz n= \sum_{k=1}^{n} \dfrac{(n-1)!}{ (n-k)! } \sum_{\ell=0}^{n-k} \binom{n-k}{\ell}  \pfz \ell  \pfz {n-k-\ell} .
\end{equation}

We will prove \eqref{Recursion} by proving that any parking function $h \in \PFZ (n)$ can be decomposed uniquely into a triple $(f, g, D)$, where $f$ and $g$ are two parking functions which have zero secondary dinv and whose domains are disjoint subsets of $[n]$ not containing $1$, and $D$ is a sequence consisting of the remaining elements of $[n]$ and beginning with $1$. This will give us a bijection.
%Here is where I implemented suggestion #3 from referee with 31 suggestions.
Recursion \eqref{Recursion} allows us to enumerate $\PFZ(n)$. This recursion is satisfied by other combinatorial objects and defines the sequence A007840 on OEIS. We describe one of the objects it counts. 

Let $\numocd(n)$ be the number of ordered cycle decompositions of $n$, defined in Section~\ref{subsec:ocd}. Please also see \cite[Page 8]{AA}. %

 Our main result is
\begin{theorem}\label{thm:enumPFZ}Let $n$ be a nonnegative integer. We have $\pfz n=\numocd(n)$. \end{theorem}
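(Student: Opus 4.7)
My plan is to prove Theorem~\ref{thm:enumPFZ} by induction on $n$, using the recursion~\eqref{Recursion} already established for $\pfz n$. Rather than attempt a direct bijection between $\PFZ(n)$ and the ordered cycle decompositions of $[n]$, I would show that $\numocd(n)$ satisfies the \emph{same} recursion with the same initial value $\numocd(0)=\pfz 0=1$. This seems cleaner because the authors' decomposition of a parking function into a triple $(f,g,D)$ already has an immediate structural analogue on the ordered cycle decomposition side.

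For the inductive step I would prove
\begin{equation*}
\numocd(n) = \sum_{k=1}^{n} \frac{(n-1)!}{(n-k)!} \sum_{\ell=0}^{n-k} \binom{n-k}{\ell}\, \numocd(\ell)\, \numocd(n-k-\ell)
\end{equation*}
by classifying each ordered cycle decomposition of $[n]$ according to the distinguished cycle $C$ that contains the element $1$. If $|C|=k$, then $C$ is a cyclic arrangement of $1$ together with $k-1$ elements chosen from $[n]\setminus\{1\}$, so the number of possibilities for $C$ is $\binom{n-1}{k-1}(k-1)! = (n-1)!/(n-k)!$. The remaining $n-k$ elements then split between the cycles listed before $C$ and the cycles listed after $C$; picking which $\ell$ of them go into the ``before'' group contributes the factor $\binom{n-k}{\ell}$, and the two flanking blocks themselves form ordered cycle decompositions on their underlying sets, counted by $\numocd(\ell)$ and $\numocd(n-k-\ell)$. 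Summing over $k$ and $\ell$ gives the desired recursion, and combining it with \eqref{Recursion} closes the induction.

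The proof is essentially routine counting once the parallel with the authors' decomposition $(f,g,D)$ is noticed: the cycle $C$ plays the role of the sequence $D$ (both are distinguished objects of size $k$ containing the element $1$, specified by a choice of $k-1$ free elements and an arrangement thereof), while the two flanking ordered cycle decompositions play the role of $f$ and $g$. There is no real obstacle; at worst one must confirm that the number of OCDs on an $m$-element set depends only on $m$, which should be immediate from the definition in Section~\ref{subsec:ocd}.
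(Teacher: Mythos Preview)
Your proposal is correct and matches the paper's approach essentially line for line: the paper also proves Theorem~\ref{thm:enumPFZ} by showing that $\numocd(n)$ satisfies the same recursion~\eqref{Recursion} (this is Proposition~\ref{prop:ocd_rec}), via exactly the ``split at the cycle containing $1$'' counting argument you describe, and then concludes by induction using the common initial value. Your observation about the structural parallel between $(f,g,D)$ and (preceding cycles, following cycles, cycle of $1$) is precisely the intuition behind the paper's setup.
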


As we want to prove that $\pfz n$ is the same as $\numocd(n)$, we
prove $\numocd (n)$ satisfies \eqref{Recursion}, with $\pfz{n}$ replaced
by $\numocd(n) $ throughout.

\begin{proposition}\label{prop:ocd_rec}
The recursion \eqref{Recursion} is satisfied by $\numocd (n)$, the number of ordered cycle decompositions of $n$.
\end{proposition}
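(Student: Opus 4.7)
The plan is to give a direct combinatorial proof, parallel to the parking-function decomposition sketched in the introduction, where the role of the distinguished sequence $D$ will be played by the cycle containing the element $1$. Recall from Section~\ref{subsec:ocd} that an ordered cycle decomposition of $[n]$ is a sequence $(C_1, C_2, \ldots, C_r)$ of pairwise disjoint nonempty cycles whose supports partition $[n]$, with the convention $\numocd(0)=1$ for the empty sequence. Since the notion depends only on the cardinality of the ground set, the number of ordered cycle decompositions of any $\ell$-element subset of $[n]$ equals $\numocd(\ell)$.

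Given $T=(C_1,\ldots,C_r)$ counted by $\numocd(n)$, I would locate the unique index $i$ with $1 \in C_i$ and set $k=|C_i|$. Splitting $T$ into three blocks
\[
\bigl(C_1,\ldots,C_{i-1}\bigr),\qquad C_i,\qquad \bigl(C_{i+1},\ldots,C_r\bigr)
\]
yields an ordered cycle decomposition of some $A\subseteq [n]\setminus\{1\}$, the distinguished cycle $C_i$ on $\{1\}\cup(C_i\setminus\{1\})$, and an ordered cycle decomposition of the complementary set $B=[n]\setminus(A\cup C_i)$. This assignment is manifestly invertible: one recovers $T$ by concatenating the prefix decomposition, inserting $C_i$, and appending the suffix decomposition, so it is a bijection between $\numocd(n)$ and the triples (prefix, $C_i$, suffix).

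To count the triples: for fixed $k\in\{1,\ldots,n\}$, the cycle $C_i$ is determined by choosing its $k-1$ non-$1$ elements from $[n]\setminus\{1\}$ (in $\binom{n-1}{k-1}$ ways) and cyclically arranging them together with $1$ (in $(k-1)!$ ways, taking $1$ as basepoint), contributing $\binom{n-1}{k-1}(k-1)! = \frac{(n-1)!}{(n-k)!}$. Writing $\ell=|A|$, the split of the remaining $n-k$ elements as $A\sqcup B$ contributes $\binom{n-k}{\ell}$, and the two resulting ordered cycle decompositions contribute $\numocd(\ell)$ and $\numocd(n-k-\ell)$ respectively. Summing over $k$ and $\ell$ produces exactly the right-hand side of \eqref{Recursion}.

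There is essentially no serious obstacle: the proof hinges on the simple observation that the cycle containing $1$ cleanly splits the linear order on cycles into a prefix and a suffix. The only things to be careful about are fixing the empty-decomposition convention $\numocd(0)=1$, which makes the $k=n$ term of the sum correct, and noting that $\numocd(\ell)$ is well defined on any $\ell$-element subset, so that relabeling the supports $A$ and $B$ is harmless.
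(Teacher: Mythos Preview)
Your proof is correct and follows essentially the same route as the paper's: both isolate the cycle containing $1$, let $k$ be its size and $\ell$ the number of elements in the cycles preceding it, and count the resulting triples via $\binom{n-1}{k-1}(k-1)!\cdot\binom{n-k}{\ell}\cdot\numocd(\ell)\cdot\numocd(n-k-\ell)$ before summing over $k$ and $\ell$. Your write-up is in fact slightly more careful than the paper's (you explicitly note the bijection, the convention $\numocd(0)=1$, and the invariance under relabeling), and you avoid the paper's typo $\frac{(n-1)!}{(k-1)!}$ in favor of the correct $\frac{(n-1)!}{(n-k)!}$.
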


We begin with the definitions of parking functions and dinv in
Section~\ref{sec:prelims}.  In Section~\ref{sec:map}, we begin our
proof of \eqref{Recursion} by defining a set $\decomp(n)$ of triples in \eqref{def:decomp} and \eqref{eq:decomp}. We define a process to insert an integer
into a parking function in Section~\ref{subsec:insertion}. In
Section~\ref{subsec:map}, we construct a map $\mainmap$ from the set $\decomp(n)$
to $\PFZ(n)$. Given a triple $(f,g,D)\in \decomp(n)$, where $f$ and $g$ are parking functions and $D$ is a sequence, we will insert
the elements of $D$ and $g$ into $f$ to produce
$h\in\PFZ(n)$. Not until
Section~\ref{sec:mapinv} do we show that $\mainmap$ is
invertible. Finally, in Section~\ref{subsec:enum}, we say a few words
on why the bijection proves \eqref{Recursion} and therefore
Theorem~\ref{thm:enumPFZ}.

\section{Preliminaries}
\label{sec:prelims}
\subsection{Parking functions}
There are many equivalent definitions of parking functions. We use
their diagrams to define them.
\begin{definition}
  \label{def:pfs}
  A \mydef{Dyck path} of length $2n$ is a lattice path from $(0,0)$ to $(n,n)$ which never goes below the line $y=x$. Let $A\subset\Z$, $|A|=n$. Starting with a Dyck path $D$ of length $2n$, a \mydef{parking function} on $A$ is an arrangement of the elements of $A$ in the squares immediately to the right of the $n$ vertical steps of $D$ and with strict decrease down the columns. We will often refer to the elements of $A$ as the \mydef{labels} of the parking function. 

  \end{definition}
The standard definition of a parking function is the case  $A=[n]$.

We can also view a parking function as a function $f:A\to[n]$, as the name
suggests. Given a parking function in diagram form, set $f(i)=j$ if
$i$ has been placed in column $j$. The domain of $f$ is $A$ and denoted
$\dom(f)$. We denote the set of parking functions on $A$ by $\PF(A)$
and in the case $A=[ n ]$ by $\PF(n)$. In this paper, we use both the
diagram form and the function form of a parking function.  Note that the function form is usually written as a vector $(f(1),f(2),\ldots,f(n))$. Please see
\cite{haglund2008} for more details and Figure~\ref{fig:pfAndDiagram}
for an example.

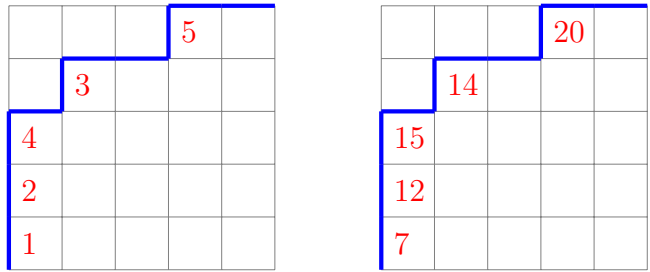
\begin{figure}
  \centering
\begin{tikzpicture}[scale=.7]
  \begin{scope}
\draw[step=1cm,gray,very thin] (0,0) grid (5,5); 
\draw[ultra thick,blue] (0,0)--(0,1) node[red,midway,right] {$1$};
\draw[ultra thick,blue] (0,1)--(0,2) node[red,midway,right] {$2$};
\draw[ultra thick,blue] (0,2)--(0,3) node[red,midway,right] {$4$};
\draw[ultra thick,blue] (0,3)--(1,3);
\draw[ultra thick,blue] (1,3)--(1,4) node[red,midway,right] {$3$};
\draw[ultra thick,blue] (1,4)--(2,4);
\draw[ultra thick,blue] (2,4)--(3,4);
\draw[ultra thick,blue] (3,4)--(3,5) node[red,midway,right] {$5$};
\draw[ultra thick,blue] (3,5)--(4,5);
\draw[ultra thick,blue] (4,5)--(5,5);
\end{scope}
  \begin{scope}[shift={(7,0)}]
\draw[step=1cm,gray,very thin] (0,0) grid (5,5); 
\draw[ultra thick,blue] (0,0)--(0,1) node[red,midway,right] {$7$};
\draw[ultra thick,blue] (0,1)--(0,2) node[red,midway,right] {$12$};
\draw[ultra thick,blue] (0,2)--(0,3) node[red,midway,right] {$15$};
\draw[ultra thick,blue] (0,3)--(1,3);
\draw[ultra thick,blue] (1,3)--(1,4) node[red,midway,right] {$14$};
\draw[ultra thick,blue] (1,4)--(2,4);
\draw[ultra thick,blue] (2,4)--(3,4);
\draw[ultra thick,blue] (3,4)--(3,5) node[red,midway,right] {$20$};
\draw[ultra thick,blue] (3,5)--(4,5);
\draw[ultra thick,blue] (4,5)--(5,5);
\end{scope}

  \end{tikzpicture}

\caption{On the left, the parking function $f\in\PF(5)$ whose function form is $(1,1,2,1,4)$. On the right is a parking function in $\PF(A)$, $A=\{7,12,14,15,20\}$.}
\label{fig:pfAndDiagram}.
\end{figure}

% See the diagrams on the examples \ref{Ex1PF} and \ref{Ex2PF}.

\subsection{Diagonal inversion ($\dinv$) statistics}

Let $f \in \PF(n)$ be a parking function. The \mydef{row} $\rowa f i$
of $i\in[n]$ is the row of $i$ in $f$, counting from the bottom, and
its \mydef{column} $\cola f i$ counting from the left. For example, in
Figure~\ref{fig:pfAndDiagram} on the left, we have $\rowa f 2 = 2$ and $\cola f 2
= 1$. The \mydef{diagonal} of an element $i\in[n]$ is $\diaga f
i=\rowa f i-\cola f i$.

We will also consider the set \mydef{diagonal} $d$ for $d\in[n]$. It is the set
$$\{i\in[n]:\diaga f i=d\}.$$

Again referring to Figure~\ref{fig:pfAndDiagram} on the left,
%$$\diagset f 0=\{1\},\diagset f 1=\{2,5\}, \diagset f 3=\{3,4\},$$
diagonal $0$ of $f$ is $\{1\}$, diagonal $1$ of $f$ is $\{2,5\}$, diagonal $2$ of $f$ is $\{3,4\}$,
and
all higher diagonals are empty. We denote the $\max_{x\in A}\diaga f x$ by $\maxdiag f$. All parking functions have nonempty diagonal $0$.
%Here I worked on suggestion 14

There are two types of \mydef{dinv pairs}. If $i,j\in[n]$ with $i<j$, $\diaga
f i=\diaga f j$, and $\cola f i<\cola f j$ ($i$ to left, same diagonal), then $(i,j)$ is a primary
dinv pair and $\pridinv(f)$ is defined as the number of such
pairs. If $i,j\in[n]$ with $i<j$, $\diaga f i=\diaga f j-1$, and
$\cola f i>\cola f j$ ($i$ strictly to right, lower adjacent diagonal), then $(j,i)$ is a secondary dinv pair and the
number of these pairs is $\secdinv(f)$. Please see Figure~\ref{fig:dinvschematic}. Primary and secondary dinv
were introduced in \cite{gxz2014} and their sum is dinv. We study
parking functions with \mydef{zero secondary dinv}: $\secdinv(f)=0$ and we denote this subset of $\PF(n)$ by $\PFZ(n)$. Parking functions on a domain $A$ with zero secondary dinv are denoted $\PFZ(A)$.

In Figure ~\ref{fig:pfAndDiagram} on the left, there is one primary dinv pair (2, 5), and no secondary dinv pairs; therefore
$\pridinv(f) = 1$ and $\secdinv(f) = 0$, and we have $f \in \PFZ(5)$.

\begin{figure}
  \centering
  \begin{tikzpicture}[scale=.8]
    \def\h{6}
    \def\s{.8}
    \def\d{3}
    \def\k{3}
    \def\kj{1}
    \draw[ultra thick] (0,0)--(0,\h)--(\h,\h)--(\h,0)--cycle;
    %diagonal for i
    \draw[thick,blue] (0,\d*\s-\s)--(\k*\s, \k*\s+\d*\s-1*\s);
    \node[red] (i) at (\k*\s+.5*\s, \k*\s+\d*\s-.5*\s) {$i$};
    \draw[thick,blue] (\k*\s, \k*\s+\d*\s-1*\s)--(\k*\s, \k*\s+\d*\s)--(\k*\s+\s, \k*\s+\d*\s)--(\k*\s+\s, \k*\s+\d*\s-1*\s)--cycle;
    \draw[thick,blue] (\k*\s+\s, \k*\s+\d*\s)--(\h-\d*\s+\s,\h);
    \node[purple] at (\h-\d*\s+\s+.8*\s,\h+.5*\s)  {$d-1$};
% diagonal for j
    
    \draw[thick,blue] (0,\d*\s)--(\kj*\s, \kj*\s+\d*\s);
    \node[red] (i) at (\kj*\s+.5*\s, \kj*\s+\d*\s+.5*\s) {$j$};
    \draw[thick,blue] (\kj*\s, \kj*\s+\d*\s)--(\kj*\s, \kj*\s+\d*\s+\s)--(\kj*\s+\s, \kj*\s+\d*\s+\s)--(\kj*\s+\s, \kj*\s+\d*\s)--cycle;
    \draw[thick,blue] (\kj*\s+\s, \kj*\s+\d*\s+\s)--(\h-\d*\s,\h);
    \node [purple] at(\h-\d*\s,\h+.5*\s) {$d$};
    
    \end{tikzpicture}
  \caption{If $i<j$, $\diaga f i=\diaga f j-1$, and $\cola f i>\cola f j$, then $(j,i)$ is a secondary dinv pair for $f$.}
  \label{fig:dinvschematic}
\end{figure}

\subsection{Ordered cycle decompositions}\label{subsec:ocd} We define the second main object of this note and prove Proposition~\ref{prop:ocd_rec}.
\begin{definition}
    \label{def:ocd}
    An \mydef{ordered cycle decomposition} of a permutation of $n$ is a sequence $(\sigma_1,\sigma_2,\ldots,\sigma_k)$ of nonempty, disjoint cycles whose product $\sigma_1\cdot\sigma_2\cdots\sigma_k$ is a permutation of $n$. 
\end{definition}

As in the introduction, we denote the number of ordered cycle decompositions of permutations of $n$ by $\numocd(n).$ For example, $\numocd(2)=3$ as $\{ (12),(1)(2) , (2)(1) \}$ is the set of ordered cycle decompositions of permutations of $2$, and $\numocd(3) = 14$ as 
$$
\begin{array}{l}
\{ (123), (132), (12)(3), (3)(12), (13)(2), (2)(13), (23)(1), (1)(23), \\
(1)(2)(3), (1)(3)(2), (2)(1)(3), (2)(3)(1), (3)(1)(2), (3)(2)(1) \}
\end{array}
$$
is the set of ordered cycle decompositions of permutations of $3$.

The expression
\begin{equation}
\numocd(n)=\sum_{k=0}^nc(n,k)k!,\end{equation} where $c(n,k)$ is the signless Stirling number of the first kind, also appears in \cite{AA}.

 We claim in Proposition~\ref{prop:ocd_rec} that

\begin{equation}\label{eq:ocd_rec}
\numocd(n)= \sum_{k=1}^{n} \dfrac{(n-1)!}{ (n-k)! } \sum_{\ell=0}^{n-k} \binom{n-k}{\ell}  \numocd(\ell)  \numocd({n-k-\ell}) .
\end{equation}

\begin{proof}[Proof of Proposition~\ref{prop:ocd_rec}]
Suppose there are $k$ elements in the cycle which contains $1$ and
$\ell$ elements which precede this
cycle, where $0\le\ell\le n-k$. There are $\binom{n-1}{k-1}\cdot (k-1)!=\frac{(n-1)!}{(k-1)!}$
such cycles, $\binom{n-k}{\ell}\cdot\numocd(\ell)$ ways to pick and
arrange the elements which precede the cycle, and $\numocd(n-k-\ell)$ ways
to arrange the elements which follow the cycle. Finally, sum over $k$
and $\ell$.

\end{proof}

\section{The map $\mainmap$}
\label{sec:map}
Fix a positive integer $n$.  Let $A,B,E\subset[n]$ be such that $1\in E$ and $[n]$ is the disjoint union of $A$, $B$, and $E$.
Set
\begin{equation}\label{def:decomp}
  \decomp(A,B,E)=\{(f,g,D)\},
\end{equation}
  where $f\in\PFZ(A)$, $g\in\PFZ(B)$,
and $D=(e_1,e_2,\ldots,e_{|E|})$ is an ordering of $E$ such
$e_1=1$. We
set

\begin{equation}
  \label{eq:decomp}
  \decomp(n)=\bigcup_{\substack{k,\ell\in\Z_{\ge0}\\k+\ell\le
      n}}\bigcup_{\substack{|A|=\ell\\|B|=k}}\decomp(A,B,E),
 \end{equation} 
where $A$ and  $B$ in \eqref{eq:decomp} are disjoint subsets of $[n]$ which do not contain $1$, $E=[n]\setminus A\cup B$, and $\decomp(A,B,E)$ is as in \eqref{def:decomp}. 

%are as described in the beginning of the paragraph. 

\subsection{Insertion}
\label{subsec:insertion}
Let $A\subset\Z$, let $f \in \PFZ (A)$, let $x \not\in A$, and let $k$ be a positive integer. We define the insertion of number $x$ into $f$ in place $k$ as follows and denote the result by $\hat f = \Gamma (f,x,k)$. Here we use the function form of $f$. For $a\in A\cup\{x\}$ we define

\begin{equation}
\label{Insertion}
{\hat f}(a) =
\begin{cases}
 f(a) & \text{ if $f(a) < k$  or  $f(a)=k$ and  $a<x$,}  \\ 
 f(a)+1 & \text{ if $f(a) > k$  or  $f(a)=k$ and  $a>x$,} \\
k &\text{if $a=x$.}
\end{cases}
\end{equation}

Insertion can be seen in terms of the diagram. It splits the diagram of $f$ at column $k$, with the portion to the right of $a$'s cell or directly above $a$'s cell shifting to the right. A row and column will be added to $f$'s diagram form.

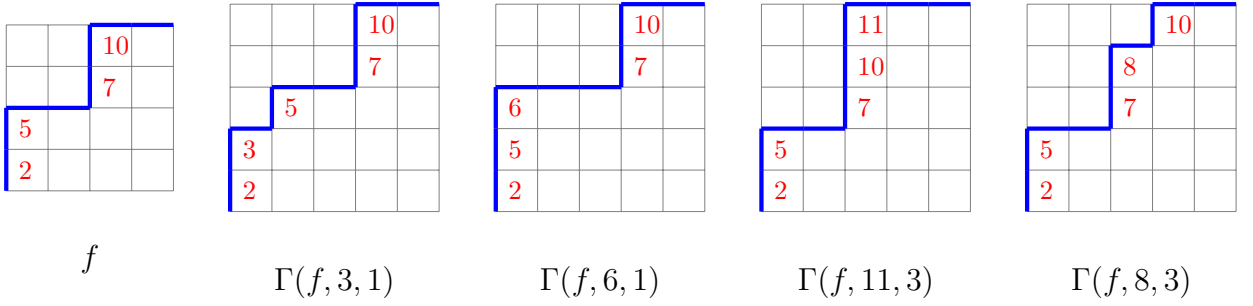
\begin{figure}
  \centering
  \begin{tikzpicture}[node distance=.4cm]
    \def\s{.55}
    \node (A) at (0,0) {\begin{tikzpicture}[scale=\s,font=\footnotesize]%\input{pic25710}
   \begin{scope}
\draw[step=1cm,gray,very thin] (0,0) grid (4,4); 
\draw[ultra thick,blue] (0,0)--(0,1) node[red,midway,right] {$2$};
\draw[ultra thick,blue] (0,1)--(0,2) node[red,midway,right] {$5$};
\draw[ultra thick,blue] (0,2)--(1,2);
\draw[ultra thick,blue] (1,2)--(2,2);
\draw[ultra thick,blue] (2,2)--(2,3) node[red,midway,right] {$7$};
\draw[ultra thick,blue] (2,3)--(2,4) node[red,midway,right] {$10$};
\draw[ultra thick,blue] (2,4)--(3,4);
\draw[ultra thick,blue] (3,4)--(4,4);
\end{scope}
     \end{tikzpicture}};
    \node (A1) [below=of A]{$f$};
    \node  (B) [right=of A]{\begin{tikzpicture}[scale=\s,font=\footnotesize]%\input{pic25710insert13}
\begin{scope}
\draw[step=1cm,gray,very thin] (0,0) grid (5,5); 
\draw[ultra thick,blue] (0,0)--(0,1) node[red,midway,right] {$2$};
\draw[ultra thick,blue] (0,1)--(0,2) node[red,midway,right] {$3$};
\draw[ultra thick,blue] (0,2)--(1,2);
\draw[ultra thick,blue] (1,2)--(1,3) node[red,midway,right] {$5$};
\draw[ultra thick,blue] (1,3)--(2,3);
\draw[ultra thick,blue] (2,3)--(3,3);
\draw[ultra thick,blue] (3,3)--(3,4) node[red,midway,right] {$7$};
\draw[ultra thick,blue] (3,4)--(3,5) node[red,midway,right] {$10$};
\draw[ultra thick,blue] (3,5)--(4,5);
\draw[ultra thick,blue] (4,5)--(5,5);
\end{scope}
         \end{tikzpicture}};
    \node (B1)[below=of B] {$\Gamma(f,3,1)$};
    \node  (C) [right=of B]{\begin{tikzpicture}[scale=\s,font=\footnotesize]%\input{pic25710insert16}
   \begin{scope}
\draw[step=1cm,gray,very thin] (0,0) grid (5,5); 
\draw[ultra thick,blue] (0,0)--(0,1) node[red,midway,right] {$2$};
\draw[ultra thick,blue] (0,1)--(0,2) node[red,midway,right] {$5$};
\draw[ultra thick,blue] (0,2)--(0,3) node[red,midway,right] {$6$};
\draw[ultra thick,blue] (0,3)--(1,3);
\draw[ultra thick,blue] (1,3)--(2,3);
\draw[ultra thick,blue] (2,3)--(3,3);
\draw[ultra thick,blue] (3,3)--(3,4) node[red,midway,right] {$7$};
\draw[ultra thick,blue] (3,4)--(3,5) node[red,midway,right] {$10$};
\draw[ultra thick,blue] (3,5)--(4,5);
\draw[ultra thick,blue] (4,5)--(5,5);
\end{scope}
      \end{tikzpicture}};
    \node (C1)[below=of C] {$\Gamma(f,6,1)$};
    \node  (D) [right=of C]{\begin{tikzpicture}[scale=\s,font=\footnotesize]%\input{pic25710insert311}
        \begin{scope}
\draw[step=1cm,gray,very thin] (0,0) grid (5,5); 
\draw[ultra thick,blue] (0,0)--(0,1) node[red,midway,right] {$2$};
\draw[ultra thick,blue] (0,1)--(0,2) node[red,midway,right] {$5$};
\draw[ultra thick,blue] (0,2)--(1,2);
\draw[ultra thick,blue] (1,2)--(2,2);
\draw[ultra thick,blue] (2,2)--(2,3) node[red,midway,right] {$7$};
\draw[ultra thick,blue] (2,3)--(2,4) node[red,midway,right] {$10$};
\draw[ultra thick,blue] (2,4)--(2,5) node[red,midway,right] {$11$};
\draw[ultra thick,blue] (2,5)--(3,5);
\draw[ultra thick,blue] (3,5)--(4,5);
\draw[ultra thick,blue] (4,5)--(5,5);
\end{scope}
    \end{tikzpicture}};
    \node (D1)[below=of D] {$\Gamma(f,11,3)$};
    \node  (E) [right=of D]{\begin{tikzpicture}[scale=\s,font=\footnotesize]%\input{pic25710insert38}
\begin{scope}
\draw[step=1cm,gray,very thin] (0,0) grid (5,5); 
\draw[ultra thick,blue] (0,0)--(0,1) node[red,midway,right] {$2$};
\draw[ultra thick,blue] (0,1)--(0,2) node[red,midway,right] {$5$};
\draw[ultra thick,blue] (0,2)--(1,2);
\draw[ultra thick,blue] (1,2)--(2,2);
\draw[ultra thick,blue] (2,2)--(2,3) node[red,midway,right] {$7$};
\draw[ultra thick,blue] (2,3)--(2,4) node[red,midway,right] {$8$};
\draw[ultra thick,blue] (2,4)--(3,4);
\draw[ultra thick,blue] (3,4)--(3,5) node[red,midway,right] {$10$};
\draw[ultra thick,blue] (3,5)--(4,5);
\draw[ultra thick,blue] (4,5)--(5,5);
\end{scope}
\end{tikzpicture}};
    \node (E1)[below=of E] {$\Gamma(f,8,3)$};
  
\end{tikzpicture}
\caption{The parking function $f$ and the results of various insertions.}
\label{fig:insertion}
\end{figure}

For the map $\mainmap$, we'll need to insert without increasing secondary dinv.
\begin{definition}
  Let $h$ be a parking function on $A$, let $x\notin A$, let $c,d\le|A|$. Denote the elements in diagonal $d-1$ which are strictly to the right of column $c$ by $y_1,y_2,\ldots,y_{m}$. They are ordered so that
$$c<\cola h {y_1}<\cola h {y_2}<\cdots<\cola h {y_m}.$$ Let $\ell^*$ be the minimum $\ell$ such that for all $\ell>\ell^*$, $y_{\ell}>x$. If $x<y_1$, then $\ell^*=0$. 
  The \mydef{$(c,d)$-insertion} of $x$ in $h$ inserts $x$ on diagonal $d$ and in a column at least $c$ of $h$ and results in the parking function $h^*=\Gamma(h,x,c^*)$, where
  \begin{equation*}
    c^*=\begin{cases}\cola h {y_{\ell^*}}&\text{if $\ell^*>0$}\\
      c+1&\text{if $\ell^*=0$.}
    \end{cases}
\end{equation*}
    
  \end{definition}
In general, we refer to $(c,d)$-insertion as \mydef{special insertion.} 

\begin{claim}
  \label{claim:specinsert} Let $f\in\PFZ(A)$ and $x\notin A$. Suppose $c$ and $d$ satisfy the following conditions.
  \begin{enumerate}
  \item \label{cond:dcond} We have $d=\maxdiag f$ and for all $y$ such that $\cola f y>c$, we have $d>\diaga f y$; and 
  \item \label{cond:ccond} there is a label which is in column $c$ and diagonal $d$ and it's at the top of its column.  
\end{enumerate}

  If $h$ is the result of a $(c,d)$-insertion of $x$ into $f$, then $\secdinv(h)=\secdinv(f)$. What's more, $\diaga h x=d$. 
  \end{claim}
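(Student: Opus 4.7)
The plan is to handle both conclusions of the claim by dividing into the two cases $\ell^*>0$ and $\ell^*=0$. A pervasive ingredient is the observation that ordinary insertion $\Gamma(f,x,k)$ preserves the diagonal of every existing label: entries with column $<k$ keep both coordinates, entries with column $>k$ shift row and column both by $+1$, and entries in column $k$ either stay put (if smaller than $x$) or shift both by $+1$ (if larger). Hence $\diaga{h}{y}=\diaga{f}{y}$ for every $y\in A$, and the relative left/right order of existing labels is preserved except possibly among labels that both lived in column $c^*$ of $f$.

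For $\diaga{h}{x}=d$, I would first combine conditions (1) and (2) to deduce that column $c+1$ of $f$ is empty. Condition (2) puts $c+d$ cells in columns $\le c$ (the top cell of column $c$ sits in row $c+d$), while condition (1) forces every cell in a column $>c$ to have diagonal $<d$ and therefore row $<c+d+1$ in column $c+1$; this contradicts the fact that any cell in a column $>c$ must lie in a row $>c+d$. In Case 2 ($c^*=c+1$) this emptiness makes $x$ the sole entry of column $c^*$, with row $c+d+1$ and hence diagonal $d$. In Case 1 ($c^*=\cola{f}{y_{\ell^*}}$), condition (1) together with $\diaga{f}{y_{\ell^*}}=d-1$ puts $y_{\ell^*}$ at the top of column $c^*$, while the minimality of $\ell^*$ forces $y_{\ell^*}<x$; every label in column $c^*$ of $f$ is then $\le y_{\ell^*}<x$ and remains in column $c^*$, so $x$ is placed directly above $y_{\ell^*}$ at row $c^*+d$, again on diagonal $d$.

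For $\secdinv(h)=\secdinv(f)=0$ I would show that no candidate secondary dinv pair in $h$ actually qualifies. For pairs of old labels, diagonal preservation and the preservation of relative column order reduce everything to the borderline case where both labels lived in column $c^*$ of $f$; since such labels must sit on distinct diagonals, a short case split on whether each is $<x$ or $>x$ shows that no secondary dinv pair arises. For a secondary dinv pair with $x$ as the smaller element, $x$ would need a partner $y$ with $\diaga{h}{y}=d+1>\maxdiag{f}$, which is impossible. For a pair with $x$ as the larger element, one would need $y<x$, $\diaga{h}{y}=d-1$, and $\cola{h}{y}>\cola{h}{x}$; in Case 1 the column inequality forces $y$ to be some $y_i$ with $i>\ell^*$, whence $y>x$ by the choice of $\ell^*$, and in Case 2 every $y_i$ already satisfies $y_i>x$, so no such $y$ exists in either case.

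The main obstacle is this last step: combining condition (1), the ordering of the $y_i$'s on diagonal $d-1$, and the precise rule for $c^*$ to rule out a secondary dinv pair with $x$ as the larger partner. Once that is handled, the diagonal and column bookkeeping from the insertion formulas is routine.
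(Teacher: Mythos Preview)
Your proposal is correct and follows essentially the same two-case split on $\ell^*$ as the paper's own proof, with the same mechanism: place $x$ on top of $y_{\ell^*}$ when $\ell^*>0$, in the empty column $c+1$ when $\ell^*=0$, then rule out secondary dinv pairs $(j,x)$ via condition~(1) and pairs $(x,i)$ via the definition of $\ell^*$. If anything you are more careful than the paper, since you explicitly record that $\Gamma$ preserves diagonals and check that no secondary dinv pair can arise between two \emph{old} labels, a point the paper leaves implicit.
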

%  \susanna{end new version}

\begin{figure}
  \centering
  \begin{tikzpicture}[scale=.8]
    \def\h{6}
    \def\s{.8}
    \def\d{3}
    \def\k{3}
    \def\kj{1}
    \def\kja{3}
    \draw[ultra thick] (0,0)--(0,\h)--(\h,\h)--(\h,0)--cycle;
    %diagonal for i
   % \draw[thick,blue] (0,\d*\s-\s)--(\k*\s, \k*\s+\d*\s-1*\s);
   % \node[red] (i) at (\k*\s+.5*\s, \k*\s+\d*\s-.5*\s) {$i$};
    %\draw[thick,blue] (\k*\s, \k*\s+\d*\s-1*\s)--(\k*\s, \k*\s+\d*\s)--(\k*\s+\s, \k*\s+\d*\s)--(\k*\s+\s, \k*\s+\d*\s-1*\s)--cycle;
    %\draw[thick,blue] (\k*\s+\s, \k*\s+\d*\s)--(\h-\d*\s+\s,\h);
    %\node[purple] at (\h-\d*\s+\s+.8*\s,\h+.5*\s)  {$d-1$};
% diagonal for j
    
    \draw[thick,blue] (0,\d*\s)--(\kj*\s, \kj*\s+\d*\s);
    \node[red] (i) at (\kj*\s+.5*\s, \kj*\s+\d*\s+.5*\s) {$u$};
    \draw[thick,blue] (\kj*\s, \kj*\s+\d*\s)--(\kj*\s, \kj*\s+\d*\s+\s)--(\kj*\s+\s, \kj*\s+\d*\s+\s)--(\kj*\s+\s, \kj*\s+\d*\s)--cycle;
    \draw[thick,blue] (\kj*\s+\s, \kj*\s+\d*\s+\s)--(\kja*\s, \kja*\s+\d*\s);
    
 \node[red] (i) at (\kja*\s+.5*\s, \kja*\s+\d*\s+.5*\s) {$v$};
    \draw[thick,blue] (\kja*\s, \kja*\s+\d*\s)--(\kja*\s, \kja*\s+\d*\s+\s)--(\kja*\s+\s, \kja*\s+\d*\s+\s)--(\kja*\s+\s, \kja*\s+\d*\s)--cycle;
    \draw[thick,blue] (\kja*\s+\s, \kja*\s+\d*\s+\s)--(\h-\d*\s,\h);
    \draw[thick,olive] (\kja*\s+.5*\s, \kja*\s+\d*\s)--(\kja*\s+.5*\s,0);
    \node [purple] at (\kja*\s+.5*\s,-.5*\s) {$c$};
    \node [purple] at(\h-\d*\s,\h+.5*\s) {$d=\maxdiag f$};
    
    \end{tikzpicture}
  \caption{Condition~\eqref{cond:dcond} of Claim~\ref{claim:specinsert} says that $d$ is the highest index of a diagonal which has a label in it and this diagonal has no labels on it to the right of column $c$. Condition~\eqref{cond:ccond} says that column $c$ has a label on diagonal $d$.}
  \label{fig:specinsertschematic}
\end{figure}
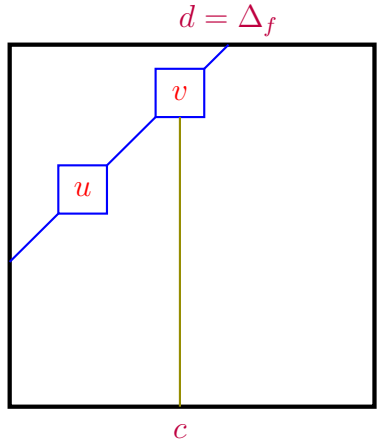
  
\begin{proof}

  First, we show that $\diaga h x = d$. Suppose $\ell^*>0$. Then $y_{\ell^*}<x$, $y_{\ell^*}$ is in diagonal $d-1$, and $c^*=\cola h {y_{\ell^*}}$, so $x$ will be placed directly on top of $y_{\ell^*}$ in diagonal $d$. Now consider the case $\ell^*=0$, so that $c^*=c+1$. By condition \eqref{cond:ccond}, $x$ must be placed in the row above the row that the largest element of column $c$ is in. Therefore, again, $x$ is in diagonal $d$ of $h$.

Recall that $(j,i)$, where $i$ and $j$ are labels in $h$,  is a secondary dinv pair if $i<j$, $\diaga h i=\diaga h j-1$, and $\cola h i>\cola h j$. By condition \eqref{cond:dcond}, there can be no secondary dinv pair $(j,x)$ and by the definition of $\ell^*$, there can be no secondary dinv pair $(x,i)$.   
  
  \end{proof}

\subsection{Definition of $\mainmap:\decomp(n)\to\PFZ(n)$}
\label{subsec:map}

Let $(f,g,D)\in\decomp(n)$, where $f\in\PFZ(A)$, $g\in\PFZ(B)$,
$D=(e_1=1,e_2,\ldots,e_{s})$, and $[n]$ is the disjoint union of $A$,
$B$, and $\{e_1,e_2,\ldots,e_s\}$. The map $\mainmap:(f,g,D)\mapsto h$
consists of three phases and produces a sequence of functions $f=
h_{0}, h_{1}, \ldots , h_{N} = h$ with all $h_{k}$ having zero secondary
dinv.

In the first phase, we will use special insertion to insert $D$ into
$f$. All elements of $D$ are inserted into diagonal $\maxdiag f$. Let
$\mcc$ be the column of the entry in diagonal $\maxdiag f$ with the
largest column index. We start with $h_0=f$ and $(\mcc,\maxdiag
f)$-insert $1$ into $h_0$, obtaining $h_1$. Notice that 1 will be the only label in its row and column. At step $i$, $1<i\le s$, we
let $\mcc=\cola {h_{i-1}} {e_{i-1}}$ and $(\mcc,\maxdiag f)$-insert
$e_i$ into $h_{i-1}$. Conditions \eqref{cond:dcond} and
\eqref{cond:ccond} of Claim~\ref{claim:specinsert} are satisfied, so
we have not added any secondary dinv. At the end of the first phase,
we have $h_0=f,h_1,\ldots,h_s$.

In the second phase, we special insert the entries of the main
diagonal (diagonal $0$) of $g$ into $h_s$. Let $z_1,z_2,\ldots,z_t$ be
the entries of the main diagonal of $g$, ordered so that $\cola g
{z_1}<\cola g {z_2}<\cdots<\cola g {z_t}$. This phase is similar to
the first phase, except that we now insert into diagonal
$\mcd=(\maxdiag f+1)=(\maxdiag {h_s}+1)$ of $h_s$. 
We begin with
$\mcc=\cola {h_s} 1-1$ and we $(\mcc,\mcd)$-insert $z_1$ into $h_s$ to obtain $h_s+1$. Notice that we cannot apply Claim~\ref{claim:specinsert} for the first insertion of phase two, because its condition \eqref{cond:ccond} is not satisfied. However, the $\ell^*$ used in special insertion will be positive in this case, since 1 is on diagonal $\mcd-1=\maxdiag {h_s}$. Since $\ell^*>0$, the first entry will be placed on top of $y_{\ell^*}$ in diagonal $\mcd=\diag {h_s}+1$. As in the proof of Claim~\ref{claim:specinsert}, since $\diaga {h_{s+1}} {z_1}=\maxdiag{h_s}$, there can be no secondary dinv pair $(j,z_1)$ and by the definition of $\ell^*$, there can be no secondary dinv pair $(z_1,i)$. At
step $i$, $s+1<i<s+t$, let $\mcc=\cola{h_{i-1}}{z_{i-1-s}}$ and
$(\mcc,\mcd)$-insert $z_{i-s}$ into $h_{i-1}$. At the end of this
phase, we have $h_0,\ldots,h_s,\ldots,h_{s+t}$. Again, conditions
\eqref{cond:dcond} and \eqref{cond:ccond} of
Claim~\ref{claim:specinsert} are satisfied at each step.

We are now in the third phase. Let $z_1,\ldots,z_t$ be as defined in the second phase. The \mydef{block} $B_i$ of $z_i$
is the set of labels $x$ in $g$ such  that $\diaga g x >0$ and $\cola g {z_i}\le\cola g x<\cola
g {z_{i+1}}$, where we set $\cola g {z_{t+1}}$ to
be $|\dom(g)|+1$ for ease of notation. Please see Figure~\ref{fig:mainmap}. In the third phase, we insert the blocks of $g$ into $h_{s+t}$. There
are $t$ blocks of $g$: one for each element $z_i$ from diagonal
$0$ of $g$.  

\begin{figure}
  \centering
  \begin{tikzpicture}[node distance=.4cm]
    \def\s{.55}

    \node  at (0,0) {\begin{tikzpicture}[scale=\s,font=\footnotesize]%\input{picgA}
\begin{scope}
\draw[step=1cm,gray,very thin] (0,0) grid (9,9); 
\draw[ultra thick,blue] (0,0)--(0,1) node[red,midway,right] {$2$};
\draw[ultra thick,blue] (0,1)--(0,2) node[red,midway,right] {$4$};
\draw[ultra thick,blue] (0,2)--(0,3) node[red,midway,right] {$12$};
\draw[ultra thick,blue] (0,3)--(1,3);
\draw[ultra thick,blue] (1,3)--(2,3);
\draw[ultra thick,blue] (2,3)--(2,4) node[red,midway,right] {$13$};
\draw[ultra thick,blue] (2,4)--(3,4);
\draw[ultra thick,blue] (3,4)--(4,4);
\draw[ultra thick,blue] (4,4)--(4,5) node[red,midway,right] {$16$};
\draw[ultra thick,blue] (4,5)--(4,6) node[red,midway,right] {$17$};
\draw[ultra thick,blue] (4,6)--(5,6);
\draw[ultra thick,blue] (5,6)--(5,7) node[red,midway,right] {$19$};
\draw[ultra thick,blue] (5,7)--(6,7);
\draw[ultra thick,blue] (6,7)--(7,7);
\draw[ultra thick,blue] (7,7)--(7,8) node[red,midway,right] {$21$};
\draw[ultra thick,blue] (7,8)--(8,8);
\draw[ultra thick,blue] (8,8)--(8,9) node[red,midway,right] {$10$};
\draw[ultra thick,blue] (8,9)--(9,9);

\draw[ultra thick, red, decorate, decoration={bumps}] (0,1)--(0,4)--(3,4)--(3,1)--cycle;
\draw[ultra thick, red, decorate, decoration={bumps}] (4,5)--(4,7)--(6,7)--(6,5)--cycle;

\end{scope}
\end{tikzpicture}};
  \end{tikzpicture}
\caption{Parking function $g$ from Figure~\ref{fig:mainmapA} with blocks drawn in.}
\label{fig:mainmap}
\end{figure}
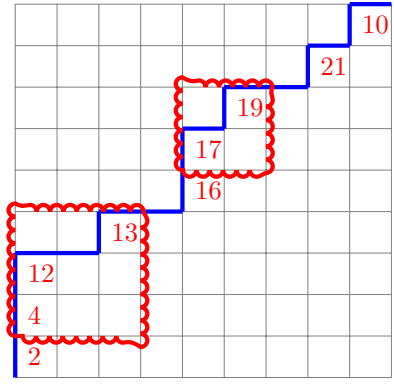
The \mydef{width} of $B_i$, $\WD(B_{i})$, is $\cola g {z_{i+1}}-\cola g {z_i}+1$. The number of rows occupied in $g$ by elements of $B_i$ is also $\WD(B_i)$. For each $B_i$, we expand the diagram of $h_{s+t+i-1}$ by $\WD(B_{i})$ columns inserted after $\cola {h_{s+t+i-1}} {z_i}$, then add the labels from $B_i$ so that their relative position to $z_i$ (number of rows above, number of columns to the right) is as it was in $g$. Let $x$ be a label added in the third phase from block $B_i$. Note that the preservation of relative positions forces
$$\diaga h x=\diaga g x +\diaga h {z_i}=\diaga g x+\mcd>\mcd.$$

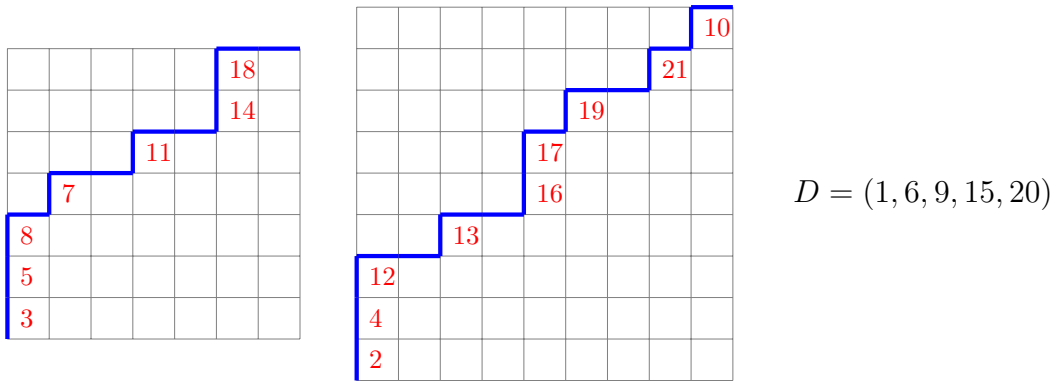
\begin{figure}
  \centering
  \begin{tikzpicture}[node distance=.4cm]
    \def\s{.55}
    \node (f) at (0,0) {\begin{tikzpicture}[scale=\s,font=\footnotesize]%\input{picf}
\begin{scope}
\draw[step=1cm,gray,very thin] (0,0) grid (7,7); 
\draw[ultra thick,blue] (0,0)--(0,1) node[red,midway,right] {$3$};
\draw[ultra thick,blue] (0,1)--(0,2) node[red,midway,right] {$5$};
\draw[ultra thick,blue] (0,2)--(0,3) node[red,midway,right] {$8$};
\draw[ultra thick,blue] (0,3)--(1,3);
\draw[ultra thick,blue] (1,3)--(1,4) node[red,midway,right] {$7$};
\draw[ultra thick,blue] (1,4)--(2,4);
\draw[ultra thick,blue] (2,4)--(3,4);
\draw[ultra thick,blue] (3,4)--(3,5) node[red,midway,right] {$11$};
\draw[ultra thick,blue] (3,5)--(4,5);
\draw[ultra thick,blue] (4,5)--(5,5);
\draw[ultra thick,blue] (5,5)--(5,6) node[red,midway,right] {$14$};
\draw[ultra thick,blue] (5,6)--(5,7) node[red,midway,right] {$18$};
\draw[ultra thick,blue] (5,7)--(6,7);
\draw[ultra thick,blue] (6,7)--(7,7);
\end{scope}
\end{tikzpicture}};
    \node  (g) [right=of f]{\begin{tikzpicture}[scale=\s,font=\footnotesize]%\input{picg}
\begin{scope}
\draw[step=1cm,gray,very thin] (0,0) grid (9,9); 
\draw[ultra thick,blue] (0,0)--(0,1) node[red,midway,right] {$2$};
\draw[ultra thick,blue] (0,1)--(0,2) node[red,midway,right] {$4$};
\draw[ultra thick,blue] (0,2)--(0,3) node[red,midway,right] {$12$};
\draw[ultra thick,blue] (0,3)--(1,3);
\draw[ultra thick,blue] (1,3)--(2,3);
\draw[ultra thick,blue] (2,3)--(2,4) node[red,midway,right] {$13$};
\draw[ultra thick,blue] (2,4)--(3,4);
\draw[ultra thick,blue] (3,4)--(4,4);
\draw[ultra thick,blue] (4,4)--(4,5) node[red,midway,right] {$16$};
\draw[ultra thick,blue] (4,5)--(4,6) node[red,midway,right] {$17$};
\draw[ultra thick,blue] (4,6)--(5,6);
\draw[ultra thick,blue] (5,6)--(5,7) node[red,midway,right] {$19$};
\draw[ultra thick,blue] (5,7)--(6,7);
\draw[ultra thick,blue] (6,7)--(7,7);
\draw[ultra thick,blue] (7,7)--(7,8) node[red,midway,right] {$21$};
\draw[ultra thick,blue] (7,8)--(8,8);
\draw[ultra thick,blue] (8,8)--(8,9) node[red,midway,right] {$10$};
\draw[ultra thick,blue] (8,9)--(9,9);
\end{scope}
\end{tikzpicture}};
\node  (D) [right=of g]{$D=(1,6,9,15,20)$};
  
\end{tikzpicture}
\caption{Parking functions $f$ and $g$, as well as a sequence $D$. See Example~\ref{ex:mainmap}: $(f,g,D)\in\decomp(21)$.}
\label{fig:mainmapA}
\end{figure}

\begin{figure}
  \centering
  \begin{tikzpicture}[node distance=.4cm]
    \def\s{.55}

    \node  (fD) at (0,0){\begin{tikzpicture}[scale=\s,font=\footnotesize]%\input{picfD}
\begin{scope}
\draw[step=1cm,gray,very thin] (0,0) grid (12,12); 
\draw[ultra thick,blue] (0,0)--(0,1) node[red,midway,right] {$3$};
\draw[ultra thick,blue] (0,1)--(0,2) node[red,midway,right] {$5$};
\draw[ultra thick,blue] (0,2)--(0,3) node[red,midway,right] {$8$};
\draw[ultra thick,blue] (0,3)--(1,3);
\draw[ultra thick,blue] (1,3)--(1,4) node[red,midway,right] {$7$};
\draw[ultra thick,blue] (1,4)--(2,4);
\draw[ultra thick,blue] (2,4)--(2,5) node[red,midway,right] {\circled{$1$}};
\draw[ultra thick,blue] (2,5)--(3,5);
\draw[ultra thick,blue] (3,5)--(3,6) node[red,midway,right] {\circled{$6$}};
\draw[ultra thick,blue] (3,6)--(4,6);
\draw[ultra thick,blue] (4,6)--(4,7) node[red,midway,right] {\circled{$9$}};
\draw[ultra thick,blue] (4,7)--(5,7);
\draw[ultra thick,blue] (5,7)--(6,7);
\draw[ultra thick,blue] (6,7)--(6,8) node[red,midway,right] {$11$};
\draw[ultra thick,blue] (6,8)--(6,9) node[red,midway,right] {\circled{$15$}};
\draw[ultra thick,blue] (6,9)--(7,9);
\draw[ultra thick,blue] (7,9)--(8,9);
\draw[ultra thick,blue] (8,9)--(9,9);
\draw[ultra thick,blue] (9,9)--(9,10) node[red,midway,right] {$14$};
\draw[ultra thick,blue] (9,10)--(9,11) node[red,midway,right] {$18$};
\draw[ultra thick,blue] (9,11)--(9,12) node[red,midway,right] {\circled{$20$}};
\draw[ultra thick,blue] (9,12)--(10,12);
\draw[ultra thick,blue] (10,12)--(11,12);
\draw[ultra thick,blue] (11,12)--(12,12);
\end{scope}
\end{tikzpicture}};

    \node  (phaseII)[right=of fD]{\begin{tikzpicture}[scale=\s,font=\footnotesize]%\input{picphaseII}
\begin{scope}
\draw[step=1cm,gray,very thin] (0,0) grid (16,16); 
\draw[ultra thick,blue] (0,0)--(0,1) node[red,midway,right] {$3$};
\draw[ultra thick,blue] (0,1)--(0,2) node[red,midway,right] {$5$};
\draw[ultra thick,blue] (0,2)--(0,3) node[red,midway,right] {$8$};
\draw[ultra thick,blue] (0,3)--(1,3);
\draw[ultra thick,blue] (1,3)--(1,4) node[red,midway,right] {$7$};
\draw[ultra thick,blue] (1,4)--(2,4);
\draw[ultra thick,blue] (2,4)--(2,5) node[red,midway,right] {$1$};
\draw[ultra thick,blue] (2,5)--(2,6) node[red,midway,right] {\circled{$2$}};
\draw[ultra thick,blue] (2,6)--(3,6);
\draw[ultra thick,blue] (3,6)--(4,6);
\draw[ultra thick,blue] (4,6)--(4,7) node[red,midway,right] {$6$};
\draw[ultra thick,blue] (4,7)--(5,7);
\draw[ultra thick,blue] (5,7)--(5,8) node[red,midway,right] {$9$};
\draw[ultra thick,blue] (5,8)--(6,8);
\draw[ultra thick,blue] (6,8)--(7,8);
\draw[ultra thick,blue] (7,8)--(7,9) node[red,midway,right] {$11$};
\draw[ultra thick,blue] (7,9)--(7,10) node[red,midway,right] {$15$};
\draw[ultra thick,blue] (7,10)--(7,11) node[red,midway,right] {\circled{$16$}};
\draw[ultra thick,blue] (7,11)--(8,11);
\draw[ultra thick,blue] (8,11)--(9,11);
\draw[ultra thick,blue] (9,11)--(10,11);
\draw[ultra thick,blue] (10,11)--(11,11);
\draw[ultra thick,blue] (11,11)--(11,12) node[red,midway,right] {$14$};
\draw[ultra thick,blue] (11,12)--(11,13) node[red,midway,right] {$18$};
\draw[ultra thick,blue] (11,13)--(11,14) node[red,midway,right] {$20$};
\draw[ultra thick,blue] (11,14)--(11,15) node[red,midway,right] {\circled{$21$}};
\draw[ultra thick,blue] (11,15)--(12,15);
\draw[ultra thick,blue] (12,15)--(12,16) node[red,midway,right] {\circled{$10$}};
\draw[ultra thick,blue] (12,16)--(13,16);
\draw[ultra thick,blue] (13,16)--(14,16);
\draw[ultra thick,blue] (14,16)--(15,16);
\draw[ultra thick,blue] (15,16)--(16,16);
\end{scope}
\end{tikzpicture}};

\end{tikzpicture}
\caption{The results after phases 1 and 2 of the map $\mainmap$ applied to $(f,g,D)$ from Figure~\ref{fig:mainmapA}.}
\label{fig:mainmapB}
\end{figure}
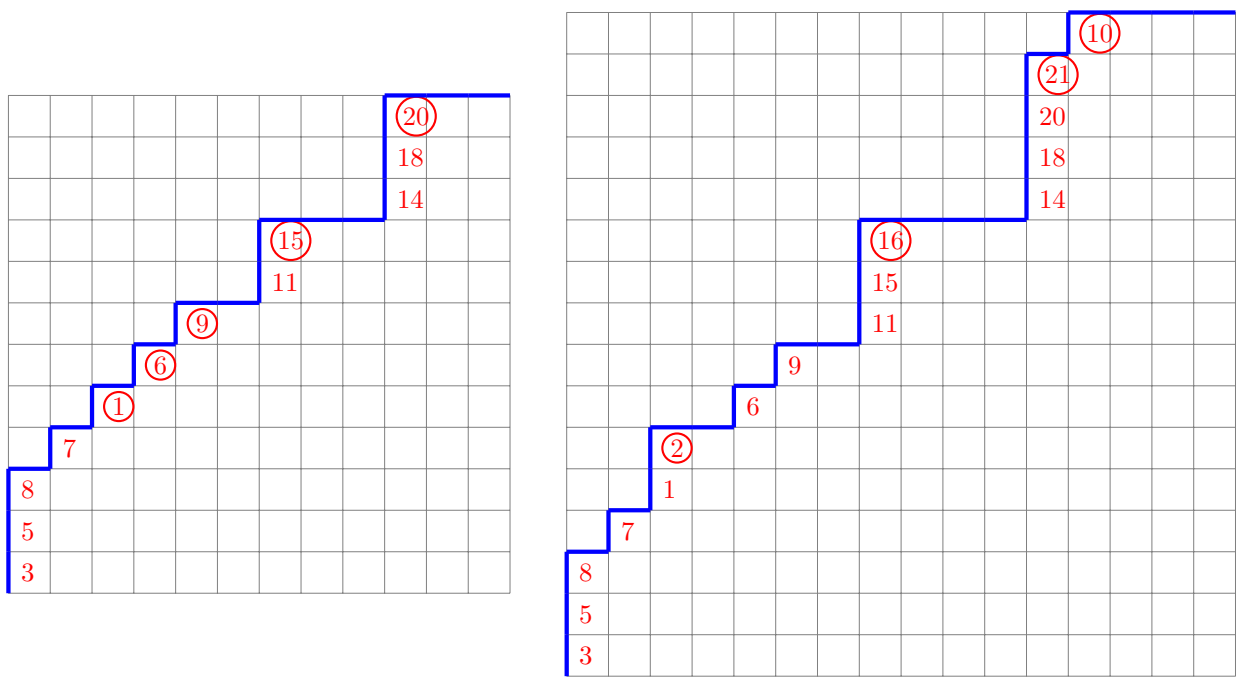

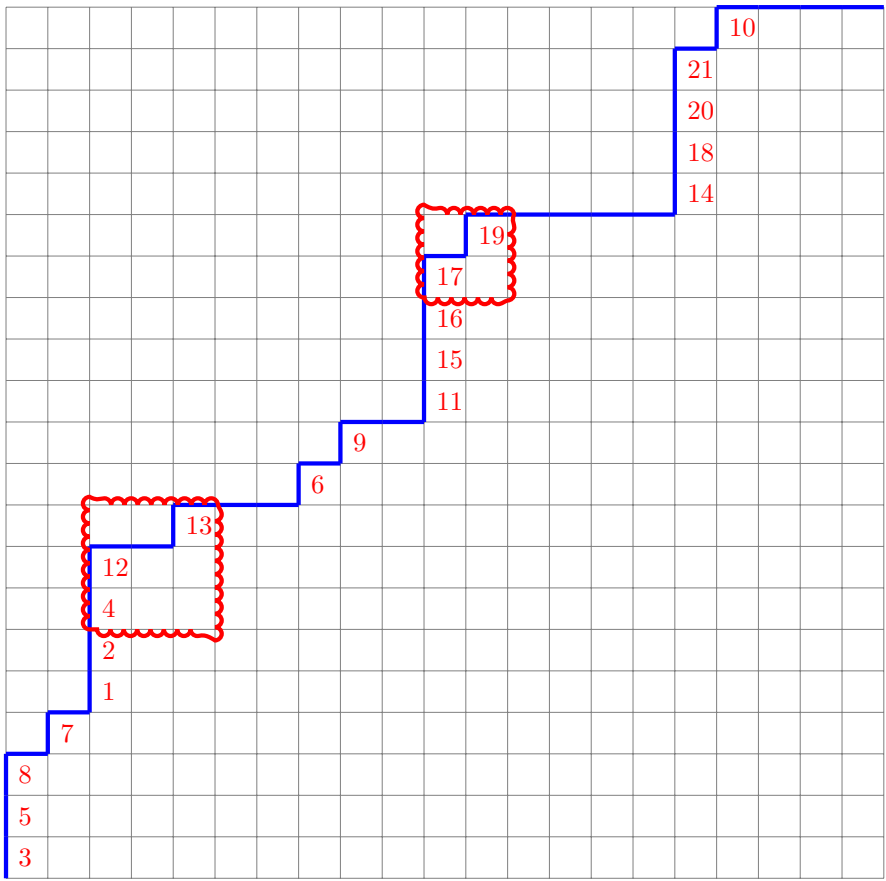
\begin{figure}
  \centering
  \begin{tikzpicture}[node distance=.4cm]
    \def\s{.55}
    \node  (phaseII) [below=of fD]{\begin{tikzpicture}[scale=\s,font=\footnotesize]%\input{picphaseIII}
\begin{scope}
\draw[step=1cm,gray,very thin] (0,0) grid (21,21); 
\draw[ultra thick,blue] (0,0)--(0,1) node[red,midway,right] {$3$};
\draw[ultra thick,blue] (0,1)--(0,2) node[red,midway,right] {$5$};
\draw[ultra thick,blue] (0,2)--(0,3) node[red,midway,right] {$8$};
\draw[ultra thick,blue] (0,3)--(1,3);

\draw[ultra thick,blue] (1,3)--(1,4) node[red,midway,right] {$7$};
\draw[ultra thick,blue] (1,4)--(2,4);
\draw[ultra thick,blue] (2,4)--(2,5) node[red,midway,right] {$1$};
\draw[ultra thick,blue] (2,5)--(2,6) node[red,midway,right] {$2$};
\draw[ultra thick,blue] (2,6)--(2,7) node[red,midway,right] {$4$};
\draw[ultra thick,blue] (2,7)--(2,8) node[red,midway,right] {$12$};
\draw[ultra thick,blue] (2,8)--(3,8);
\draw[ultra thick,blue] (3,8)--(4,8);
\draw[ultra thick,blue] (4,8)--(4,9) node[red,midway,right] {$13$};
\draw[ultra thick,blue] (4,9)--(5,9);
\draw[ultra thick,blue] (5,9)--(6,9);
\draw[ultra thick,blue] (6,9)--(7,9);
\draw[ultra thick,blue] (7,9)--(7,10) node[red,midway,right] {$6$};
\draw[ultra thick,blue] (7,10)--(8,10);
\draw[ultra thick,blue] (8,10)--(8,11) node[red,midway,right] {$9$};
\draw[ultra thick,blue] (8,11)--(9,11);
\draw[ultra thick,blue] (9,11)--(10,11);
\draw[ultra thick,blue] (10,11)--(10,12) node[red,midway,right] {$11$};
\draw[ultra thick,blue] (10,12)--(10,13) node[red,midway,right] {$15$};
\draw[ultra thick,blue] (10,13)--(10,14) node[red,midway,right] {$16$};
\draw[ultra thick,blue] (10,14)--(10,15) node[red,midway,right] {$17$};
\draw[ultra thick,blue] (10,15)--(11,15);
\draw[ultra thick,blue] (11,15)--(11,16) node[red,midway,right] {$19$};
\draw[ultra thick,blue] (11,16)--(12,16);
\draw[ultra thick,blue] (12,16)--(13,16);
\draw[ultra thick,blue] (13,16)--(14,16);
\draw[ultra thick,blue] (14,16)--(15,16);
\draw[ultra thick,blue] (15,16)--(16,16);
\draw[ultra thick,blue] (16,16)--(16,17) node[red,midway,right] {$14$};
\draw[ultra thick,blue] (16,17)--(16,18) node[red,midway,right] {$18$};
\draw[ultra thick,blue] (16,18)--(16,19) node[red,midway,right] {$20$};
\draw[ultra thick,blue] (16,19)--(16,20) node[red,midway,right] {$21$};
\draw[ultra thick,blue] (16,20)--(17,20);
\draw[ultra thick,blue] (17,20)--(17,21) node[red,midway,right] {$10$};
\draw[ultra thick,blue] (17,21)--(18,21);
\draw[ultra thick,blue] (18,21)--(19,21);
\draw[ultra thick,blue] (19,21)--(20,21);
\draw[ultra thick,blue] (20,21)--(21,21);
\draw[ultra thick, red, decorate, decoration={bumps}] (2,6)--(2,9)--(5,9)--(5,6)--cycle;
\draw[ultra thick, red, decorate, decoration={bumps}] (10,14)--(10,16)--(12,16)--(12,14)--cycle;
\end{scope}

    \end{tikzpicture}};

\end{tikzpicture}
\caption{The parking function $h=\mainmap(f,g,D)$, where $(f,g,D)$ is given in Figure~\ref{fig:mainmapA}. See also Example~\ref{ex:mainmap}.}
\label{fig:mainmapC}
\end{figure}
\begin{example}\label{ex:mainmap}
We show how the map $\mainmap$ works on the functions $f$ and $g$ and
the sequence $D$ given in Figure~\ref{fig:mainmapA}. In the first
phase, the elements of $D$ are special inserted on $f$'s highest
diagonal, which is diagonal $3$. They are circled in the diagram on
the left of Figure~\ref{fig:mainmapB}. The label $15$, for example,
cannot be inserted in a column which precedes the label $11$ from $f$,
or it would create a secondary dinv pair. In the parking function on
the right of Figure~\ref{fig:mainmapB}, the labels $2,16, 21$ and $10$ from $g$'s main diagonal
have been inserted (second phase). Finally, in Figure~\ref{fig:mainmapC}, we insert
the blocks from $g$.
  
%\susanna{TODO}
\end{example}

\begin{proposition}
For $(f,g,D)\in\decomp(n)$, the parking function $h=\mainmap(f,g,D)$ has $\secdinv(h)=0$. 
\end{proposition}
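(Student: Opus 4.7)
The plan is to track secondary dinv through the three phases of $\mainmap$. Since $f\in\PFZ(A)$, we start with $\secdinv(h_0)=0$, so it suffices to show that each phase preserves zero secondary dinv. For phases~1 and~2 this is already verified in the text, using Claim~\ref{claim:specinsert} at every step except the first step of phase~2, which is treated separately using the fact that $\ell^*>0$ there. Thus $\secdinv(h_{s+t})=0$, and the only remaining task is to analyze phase~3.

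Call a label in the final $h$ \emph{old} if it already appears in $h_{s+t}$ and \emph{new} if it is inserted in phase~3. By the identity $\diaga h x=\mcd+\diaga g x$ for $x\in B_i$, every new label sits on a diagonal strictly greater than $\mcd$, while every old label lies on a diagonal at most $\mcd$. In particular, any old label coming from $f$ or $D$ lies on diagonal at most $\mcd-1$, so its diagonal differs from that of any new label by at least $2$ and cannot form a secondary dinv pair with it. The only cases to check are therefore old--old pairs, new--new pairs, and mixed pairs between a new label on diagonal $\mcd+1$ and some $z_j$ on diagonal $\mcd$.

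The old--old case is automatic: phase~3 adds rows and columns in a coordinated way (the number of rows inserted for block $B_i$ equals $\WD(B_i)=|B_i|$, the number of columns inserted), so each old label is shifted equally in rows and columns and keeps its diagonal; the relative left/right order of old labels is also preserved, so old--old secondary dinv pairs in $h$ coincide with those of $h_{s+t}$, of which there are none. For new--new pairs $x\in B_i$, $y\in B_j$, the rule that each block is placed preserving the relative position of its labels to its $z$, combined with the block ordering $\cola g{z_1}<\cola g{z_2}<\cdots$, shows that the signs of $x-y$, $\diaga h x-\diaga h y$, and $\cola h x-\cola h y$ all match their counterparts in $g$; any secondary dinv pair among new labels of $h$ would therefore descend to one in $g$, contradicting $\secdinv(g)=0$.

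The main obstacle is the mixed new--old case. Here I would fix $x\in B_i$ with $\diaga g x=1$ and any $z_j$, and verify that $\cola h z_j>\cola h x$ holds if and only if $\cola g z_j>\cola g x$. The key point is that every column occupied in $h$ by a label of $B_i$ lies strictly between $\cola h{z_i}$ and $\cola h{z_{i+1}}$, which is exactly what the insertion of $\WD(B_i)$ fresh columns immediately after $z_i$ guarantees; combined with the monotonicity of $\cola h z_1<\cola h z_2<\cdots$ (mirroring the same monotonicity in $g$), this forces $\cola h z_j>\cola h x$ to be equivalent to $j>i$, which in turn is equivalent to $\cola g z_j>\cola g x$. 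A secondary dinv pair between $x$ and $z_j$ in $h$ would therefore yield one in $g$ between the same labels, which is impossible. Combining the three cases gives $\secdinv(h)=0$.
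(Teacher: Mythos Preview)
Your proof is correct and follows essentially the same strategy as the paper's: verify that phases~1 and~2 preserve zero secondary dinv via Claim~\ref{claim:specinsert} (with the first step of phase~2 handled separately), and then check that phase~3 creates no new secondary dinv pairs. The one organizational difference is that the paper groups the $z_j$'s together with the block labels as the single set $\dom(g)$ and observes that both $\diaga h x=\diaga g x+\mcd$ and the relative column order from $g$ are preserved for \emph{all} $x\in\dom(g)$; this disposes of your new--new case and your mixed new--$z_j$ case simultaneously, without the separate column-interval argument you give. Both routes are valid: the paper's is a bit more economical, while yours has the virtue of making explicit that phase~3 preserves diagonals and column order among the old labels (the paper leaves this implicit). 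One tiny imprecision: a label $x\in B_i$ can share $z_i$'s column when $\cola g x=\cola g{z_i}$, so ``strictly between'' should read $\cola h{z_i}\le\cola h x<\cola h{z_{i+1}}$; this does not affect your conclusion.
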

\begin{proof}
  We refer to the construction of $\mainmap$ given in this section.

  By Claim~\ref{claim:specinsert}, the parking function $h_{s+t}$ has zero secondary dinv, since it was built from $f\in\PFZ{(\dom(f))}$ using special insertion  with $c$ and $d$ satisfying the conditions in  Claim~\ref{claim:specinsert}. We need only show that we did not create any secondary dinv with the insertion of a block in the third phase.

   Suppose $i<j$ and $i,j\in\dom(g)$. By our construction, $\diaga h i=\diaga g i+\maxdiag f +1$ and similarly for $j$. What's more, the $z_k$ and their corresponding blocks were added in increasing order of their columns in $g$. Therefore, $\diaga h i=\diaga h j-1$ if and only if $\diaga g i=\diaga g j-1$ and $\cola h i>\cola h j$ if and only if $\cola g i >\cola g j$. Since $(j,i)$ was not a secondary dinv pair in $g$, it cannot be one in $h$. No new secondary dinv pair was created by an interaction of a label from the second phase and one from third, or two labels from the third.

Suppose we have a label from the first phase and one from the third. Their diagonals differ by at least two, so they could not form a secondary dinv pair.  
  \end{proof}

 \section{Inversion of the map $\mainmap$}
 \label{sec:mapinv}
 In this section, we show that $\mainmap$ is invertible, simply by reversing all the steps. We will “remove” rows, columns, and labels from $h$, to construct $D$ and $g$, and what remains of $h$ will be the parking function $f$.

 Let $h\in\PFZ(n)$. We must find $(f,g,D)\in\decomp(n)$.
We use $\mcd$ to denote $\diaga h 1$. The first step is to identify labels $z_1,z_2,\ldots,z_t$. They are the elements of diagonal $\mcd+1$, ordered so that $\cola h {z_1}<\cola h {z_2}<\cdots<\cola h {z_t}$ and will make up the main diagonal of $g$. 
 
Next, we define blocks $B_i$ for $1\le i< t$ as the set of labels $x$
in $h$ such that $\diaga h x>\mcd+1$ and $\cola h {z_i}\le \cola h
x<\cola h {z_{i+1}}$. The block $B_t$ is the set of $x$ with $\diaga h
x>\mcd+1$ and $\cola h {z_t}\le \cola h x$, it should be noted that $z_{i} \not \in B_{i}$ as $diag_{h} (z_{i}) = \mcd +1$.
The height $\hta B i$ of the block
$B_i$ is the number of rows occupied in $h$ by elements of $B_i$, plus $1$ for $z_i$, so that $\rowa h{z_i}+\hta B i= \rowa h{z_{i+1}}$. Note that $\hta B i=\WD {B_i}+1$, where $\WD {B_i}$ was defined in Section~\ref{subsec:map}. There will be at least $\hta B i-1$ columns which either contain elements of $B_i$ or are empty strictly between the column of $z_i$ and the column of $z_{i+1}$.  Additionally, the domain of $g$ has $b=\hta B 1+\hta B 2+\cdots+\hta B t$ elements.

The parking function $g$ is constructed by first placing the blocks,
together with $z_1,z_2,\ldots,z_t$, in a $b\times b$ grid so that
$z_1,\ldots,z_t$ are on the main diagonal. Place $z_i$ on the main
diagonal in column $\hta B 1+\cdots+\hta B {i-1}+1$.  Next the elements in
$B_i$ are placed in the grid. They must retain their relative position
to $z_i$, and every row must have exactly one element in it. 

We remove from $h$ the rows containing elements from a block and columns $\cola h {z_{i}} +1, \cola h {z_{i}} +2,\ldots, \cola h {z_{i}} +\hta B i -1$, also removing the labels in these rows and columns. We remove all labels in the column of $z_i$ from higher rows.  We call the resulting, smaller parking function $h'$. Notice that $\diaga h x=\diaga {h'} x$ for all labels $x$ remaining in $h'$.

The parking function $h'$ may now have fewer rows, columns, and
labels. The labels $z_1,\ldots,z_t$ are still in $h'$ and are on the
highest diagonal. We want to erase the labels $z_1,\ldots,z_t$ from
$h'$, but in the order $z_t,\ldots,z_1$ and always removing the column
following each $z_i$'s column. We begin with $z_t$. If there is a
label $x$ such that $\rowa {h'} x=\rowa {h'} {z_t}+1$, then since $\diaga {h'}
{z_i}>\diaga {h'} x$, we have $\cola {h'} x -\cola {h'} {z_t}>1$, so we remove
the empty column $\cola {h'} {z_t}+1$ and row $\rowa {h'} {z_t}$, thereby
also erasing $z_t$. If no such $x$ exists, then since $\diaga {h'}
{z_t}>\diaga {h'} 1\ge 0$, there is an empty column after $\cola {h'} {z_i}$,
which we remove, as well as the row $z_t$ is in. In either case, we call the resulting parking function $h_t$. We repeat this
procedure with $z_{t-1}$ down to $z_1$, producing parking functions $h_{t-1},\ldots,h_1$. At the end, $h_1$ has $\hta B
1+\hta B 2+\cdots+\hta B t$ fewer rows and columns than the original
$h$ and all remaining labels are in the same diagonal as in the
original $h$; i.e. $\diaga {h_1} x=\diaga h x$, for all labels $x$ in $h_1$.

The sequence $D=(e_1=1,e_2,\ldots,e_s)$ is taken from diagonal $\mcd$ of $h$, which is the highest diagonal of $h_1$. The elements $e_1,e_2,\ldots,e_s$ are the labels in the highest diagonal whose column indices are at least the column index of the label $1$ in $h_1$ and they are ordered so that $\cola {h_1} 1 <\cola {h_1} {e_2}<\cdots<\cola {h_1} {e_s}$. The labels $e_1,\ldots,e_s$ are removed from $h_1$, resulting in $f$. This step reverses the first phase of the construction of $\mainmap$. Finally, we mention $f$ will be on a domain of size $n-(s+b)$.
It is straightforward to see that we have reversed the procedure which defines $\mainmap$ and that this inverse function $\mainmap^{-1}$ is defined on all of $\PFZ(n)$.

 \subsection{The map $\mainmap$ and Theorem~\ref{thm:enumPFZ}}
   \label{subsec:enum}
We can now directly see that $\pfz{n}$ satisfies \eqref{Recursion} and thereby prove Theorem~\ref{thm:enumPFZ}. Let $k\in[n]$ and $0\le\ell\le n-k$. There are $\binom{n-1}{k-1}\cdot(k-1)!=\frac{(n-1)!}{(n-k)!}$ sequences $(e_1,e_2,\ldots,e_k)$, where $e_1,e_2,\ldots,e_k$ are distinct elements of $[n]$ and $e_1=1$. There are $\binom{n-k}{\ell}$ subsets $A\subset[n]$ of size $\ell$ which are disjoint from $\{e_1,\ldots,e_k\}$ and $\pfz{\ell}$ is the number of parking functions with zero secondary dinv on $A$. Let $B$ be the complement in $[n]$ of $\{e_1,\ldots,e_k\}\cup A$; there are $\pfz{n-k-\ell}$ parking functions with zero secondary dinv on $B$. We sum over $k$ and $\ell$ to obtain \eqref{Recursion}.  
   
   \section{Acknowledgments} The authors thank the reviewers for their suggestions, which significantly improved the paper.

{\bf Conflict of interest.} The authors declare that there is no conflict of interest regarding
the publishing of this paper.
 
   \bibliographystyle{plain}
\bibliography{dinv.bib}

\begin{thebibliography}{1}

\bibitem{AA}
Katie Anders and Kassie Archer.
\newblock Rooted forests that avoid sets of permutations.
\newblock {\em European J. Combin.}, 77:1--16, 2019.

\bibitem{CM}
Erik Carlsson and Anton Mellit.
\newblock A proof of the shuffle conjecture.
\newblock {\em J. Amer. Math. Soc.}, 31(3):661--697, 2018.

\bibitem{gxz2014}
Adriano~M. Garsia, Guoce Xin, and Mike Zabrocki.
\newblock A three shuffle case of the compositional parking function
  conjecture.
\newblock {\em J. Combin. Theory Ser. A}, 123:202--238, 2014.

\bibitem{haglund2008}
James Haglund.
\newblock {\em The {$q$},{$t$}-{C}atalan numbers and the space of diagonal
  harmonics}, volume~41 of {\em University Lecture Series}.
\newblock American Mathematical Society, Providence, RI, 2008.
\newblock With an appendix on the combinatorics of Macdonald polynomials.

\end{thebibliography}
\end{document}